\theoremstyle{plain}
\newtheorem{thm}{Theorem}[section]
\newtheorem{lem}[thm]{Lemma}
\newtheorem{prop}[thm]{Proposition}
\newtheorem{cor}[thm]{Corollory}
\theoremstyle{definition}
\theoremstyle{remark}
\begin{document}

\title {\bf An alternative proof of the Dirichlet prime number theorem}

\author{\it Haifeng Xu\thanks{Project supported by NSFC(Grant No. 11401515), the University Science Research Project of Jiangsu Province (14KJB110027, 12KJB110020) and the Foundation of Yangzhou University 2014CXJ004.}}
\date{\small\today}

\maketitle

\begin{abstract}
Dirichlet's theorem on arithmetic progressions called as Dirichlet prime number theorem is a classical result in number theory. Atle Selberg\cite{Selberg} gave an elementary proof of this theorem. In this article we give an alternative proof of it based on a previous result of us. Also we get an estimation of the prime counting function in the special cases.
\end{abstract}


\noindent{\bf MSC2010:} 11A41.\\
{\bf Keywords:} Dirichlet prime number theorem, prime counting function, periodicity and mirror symmetry of the pattern.


\section{Definitions and some basic facts recalled}

For the details of the notations, definitions and theorems in this section please refer to the paper \cite{Xu-Zhang-Zhou}.

Let $\{p_1,p_2,\ldots,p_n,\ldots\}$ be the set of all primes, where $p_n$ denotes the $n$-th prime number. $\pi(x)$ is the number of primes which are less than or equal to $x$. $\mathbb{N}$ is the set of positive numbers. Let
\[
M_{p_n}=\mathbb{N}-\{2k,3k,5k,\ldots,p_n k\mid k\in\mathbb{N}\},
\]
and $D_{p_n}$ be the set of difference of two consecutive numbers in $M_{p_n}$, that is,
\[
D_{p_n}=\{d_k\mid d_k=x_{k+1}-x_k, x_i\in M_{p_n}\},
\]
where $x_i$ is the $i$-th number in $M_{p_n}$. We call the minimum subset $\mathcal{P}_{p_n}$ occurs periodically in $D_{p_n}$ as the pattern of $D_{p_n}$. The number of elements in the pattern $\mathcal{P}_{p_n}$ is called the period of $D_{p_n}$. We write it as $T_{p_n}$ and it is
\[
T_{p_n}=(2-1)(3-1)(5-1)\cdots(p_n-1).
\]
Note that it is also the number of elements of $D_{p_n}$.

The length of the pattern $\mathcal{P}_{p_n}$ is defined as the sum of the elements in the pattern. We write it as $L(\mathcal{P}_{p_n})$, and it is equal to $\prod_{i=1}^n p_i$. It is also related to the last element in $M_{p_n}^{(0)}$. Here $M_{p_n}^{(0)}$ denotes the first block of $M_{p_n}$ corresponding to $\mathcal{P}_{p_n}$ (just like $M_7^{(0)}$ in Figure \ref{fig:M7}). In fact, the last element in $M_{p_n}^{(0)}$ is $1+L(\mathcal{P}_{p_n})$.

A useful observation is that the number of the elements in $M_{p_n}^{(0)}$ can be computed as follows:
\[
\prod_{i=1}^{m}p_i\cdot(1-\frac{1}{p_1})(1-\frac{1}{p_2})\cdots(1-\frac{1}{p_m})=\prod_{i=1}^{m}(p_i-1)=T_{p_m}.
\]
It shows the relation with the zeta function $\zeta(s)$ in case of $s=1$.

\begin{thm}\label{thm:mirror-symmetry}
The gap sequence except the last element in the pattern is mirror symmetric.
\end{thm}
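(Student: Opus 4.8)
The plan is to reduce the assertion to the elementary fact that the reflection $a \mapsto L - a$ preserves coprimality to $L := L(\mathcal{P}_{p_n}) = \prod_{i=1}^{n} p_i$. First I would fix notation for one period: write $1 = x_1 < x_2 < \cdots < x_{T+1} = 1 + L$ for the first $T+1$ elements of $M_{p_n}$, where $T := T_{p_n}$, so that the pattern is the gap sequence $d_k = x_{k+1} - x_k$ for $1 \le k \le T$. I would then note that, apart from the endpoints $x_1 = 1$ and $x_{T+1} = 1 + L$, each $x_k$ is an integer of $(1, L)$ divisible by none of $p_1, \dots, p_n$ --- i.e.\ a totative of $L$ strictly between $1$ and $L$ --- and conversely every such totative occurs, since $M_{p_n} = \{\, m \in \mathbb{N} : \gcd(m, L) = 1 \,\}$. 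In particular $x_T = L - 1$, because $\gcd(L - 1, L) = 1$, and $x_2 = p_{n+1}$, the least prime not among $p_1, \dots, p_n$.

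The key step is the symmetry lemma: $\sigma(a) = L - a$ is a bijection of $\{x_1, \dots, x_T\}$ onto itself. Indeed $\gcd(a, L) = \gcd(L - a, L)$, so $a$ avoids every $p_i$ iff $L - a$ does, and $\sigma$ clearly permutes $\{1, \dots, L - 1\}$; being strictly decreasing, it sends the $k$-th smallest element to the $k$-th largest, i.e.\ $x_{T+1-k} = L - x_k$ for $1 \le k \le T$. The palindrome property of the gaps is then a one-line computation: for $1 \le k \le T - 1$,
\[
d_{T-k} = x_{T-k+1} - x_{T-k} = (L - x_k) - (L - x_{k+1}) = x_{k+1} - x_k = d_k ,
\]
so $(d_1, d_2, \dots, d_{T-1})$ coincides with its own reversal. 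Finally I would explain why the last gap must be set aside: $d_T = x_{T+1} - x_T = (1 + L) - (L - 1) = 2$, whereas $d_1 = x_2 - x_1 = p_{n+1} - 1$; hence for $n \ge 2$ the full pattern $(d_1, \dots, d_T)$ is not symmetric and the clause ``except the last element'' is sharp. (For $n = 1$ one has $T = 1$ and the truncated sequence is empty, so the statement is vacuous.)

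I do not expect a genuine obstacle; the only delicate point is the bookkeeping at the two ends of a period --- verifying that $M_{p_n}$ really does pass from $x_1 = 1$ to $x_{T+1} = 1 + L$ with penultimate value $x_T = L - 1$, so that $\sigma$ is centred at $L/2$ and the single excluded gap $d_T = 2$ is exactly the ``wrap-around'' gap. The same argument can be dressed up geometrically, as the reflection of the integer segment $[1, L]$ about its midpoint $L/2$ carrying the surviving points onto themselves; this is the picture underlying the figure of $M_7^{(0)}$, and it is equivalent to the computation above. It is also worth remarking that the centre of the palindrome causes no trouble: $\sigma$ has no fixed totative and pairs $x_k$ with $x_{T+1-k}$, while the central gap $d_{T/2}$ (note that $T$ is even for $n \ge 2$, as $p_2 - 1 = 2$) is simply its own mirror, consistent with $(d_1, \dots, d_{T-1})$ having odd length.
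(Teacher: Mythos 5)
Your argument is correct and complete: the observation that $M_{p_n}$ consists exactly of the integers coprime to $L=\prod_{i=1}^n p_i$, together with the fact that $a\mapsto L-a$ is a coprimality-preserving, order-reversing involution of $\{1,\dots,L-1\}$, yields $x_{T+1-k}=L-x_k$ and hence $d_{T-k}=d_k$ for $1\le k\le T-1$, which is precisely the asserted palindrome property; your computation $d_T=(1+L)-(L-1)=2$ also recovers Corollary~\ref{cor:1} and shows the exclusion of the last gap is necessary. Note that the paper itself states Theorem~\ref{thm:mirror-symmetry} without proof, deferring to the reference \cite{Xu-Zhang-Zhou}, so there is no in-paper argument to compare against; your reflection-about-$L/2$ argument is the natural self-contained proof and is consistent with everything the paper uses downstream (in particular with the evenness of $T_{p_n}$ for $n\ge 2$ and with the central gap of Theorem~\ref{thm:about-skips}). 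The only point worth tightening is the bookkeeping you already flag: one should say explicitly that the pattern is read off from the $T+1$ elements $x_1=1<\dots<x_{T+1}=1+L$ of the first block, so that $\{x_1,\dots,x_T\}$ is exactly the set of totatives of $L$ in $[1,L-1]$ on which $\sigma$ acts.
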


\begin{cor}\label{cor:1}
The last element in every pattern $\mathcal{P}_{p_n}$ is $2$.
\end{cor}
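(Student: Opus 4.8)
The plan is to compute the last entry of $\mathcal{P}_{p_n}$ directly from the explicit description of the first block. Write $P_n=\prod_{i=1}^n p_i=L(\mathcal{P}_{p_n})$. Recall that $M_{p_n}$ is exactly the set of positive integers divisible by none of $p_1,\dots,p_n$, so the numbers of $M_{p_n}$ lying in the first block are $1=a_1<a_2<\dots<a_{T_{p_n}}$, the integers in $[1,P_n-1]$ coprime to $P_n$, and (by the facts recalled above) the next number of $M_{p_n}$ after this block is $a_{T_{p_n}+1}=1+L(\mathcal{P}_{p_n})=1+P_n$. Consequently the last element of the pattern is the difference
\[
a_{T_{p_n}+1}-a_{T_{p_n}}=(1+P_n)-a_{T_{p_n}},
\]
so everything reduces to identifying $a_{T_{p_n}}$, the largest element of $M_{p_n}$ that is $\le P_n$.

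First I would observe that $P_n$ itself is divisible by every $p_i$, hence $P_n\notin M_{p_n}$, while $\gcd(P_n-1,P_n)=1$ gives $P_n-1\in M_{p_n}$; therefore $a_{T_{p_n}}=P_n-1$ and the last entry equals $(1+P_n)-(P_n-1)=2$. Alternatively — and this is the reason the statement is packaged as a corollary of Theorem~\ref{thm:mirror-symmetry} — one can argue via the symmetry itself: the mirror symmetry of the gap sequence is realized by the involution $x\mapsto P_n-x$ of $M_{p_n}\cap[1,P_n-1]$ (well defined since $\gcd(P_n-x,P_n)=\gcd(x,P_n)$), which pairs $a_k$ with $a_{T_{p_n}+1-k}$. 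Applying it to the first block element $a_1=1$ shows that its mirror partner, the last block element $a_{T_{p_n}}$, equals $P_n-1$, and the remaining (unpaired) gap then bridges $P_n-1$ to $1+P_n$, giving $2$.

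I do not expect a genuine obstacle here; the only thing demanding care is the bookkeeping. Specifically, one must be sure that "the last element of the pattern" is the gap bridging consecutive blocks — from $P_n-1$ up to $1+P_n$ — and not an internal gap, and that this reading is consistent with $T_{p_n}=\prod_{i=1}^n(p_i-1)$ being the number of gaps and $L(\mathcal{P}_{p_n})=P_n$ being their sum. Once the block is correctly pinned down as $\{1=a_1<a_2<\dots<a_{T_{p_n}}=P_n-1\}$ with successor $1+P_n$, the computation is immediate; I would sanity-check it on a small case, e.g. $p_n=3$, where $M_3^{(0)}$ gives the numbers $1,5$ followed by $7$, so $\mathcal{P}_3=(4,2)$ and indeed the last entry is $2$.
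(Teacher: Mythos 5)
Your proposal is correct. The paper itself does not spell out a proof of this corollary (it defers to the cited reference and presents the statement as a consequence of Theorem~\ref{thm:mirror-symmetry}), so the natural comparison is with the intended symmetry route, which you also supply as your alternative argument. Your primary argument is in fact more elementary and self-contained: since $P_n=\prod_{i=1}^n p_i$ is divisible by every $p_i$ while $\gcd(P_n-1,P_n)=\gcd(P_n+1,P_n)=1$, the elements $P_n-1$ and $P_n+1$ both lie in $M_{p_n}$ with nothing of $M_{p_n}$ between them, so the bridging gap is $2$; this needs no appeal to the mirror symmetry at all, whereas the corollary-of-symmetry reading requires separately knowing that $a_1=1$ pairs with $a_{T_{p_n}}=P_n-1$ under the involution $x\mapsto P_n-x$, which is exactly the computation you do. Your bookkeeping caveat is also well placed: the paper's statement that the last element of $M_{p_n}^{(0)}$ is $1+L(\mathcal{P}_{p_n})$ means the final gap of the pattern is indeed the one from $P_n-1$ to $P_n+1$, consistent with $T_{p_n}$ gaps summing to $P_n$, and your check on $\mathcal{P}_3=(4,2)$ confirms the convention.
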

\begin{thm}\label{thm:about-skips}
(i) The multiplicities of skips $2$ and $4$ are always same and odd. In fact, it is equal to $(3-2)\cdot(5-2)\cdots(p_k-2)$ for $D_{p_k}$.\\
(ii) The central gap is always $4$, and the multiplicities of all gaps except $2$ and $4$ are even.
\end{thm}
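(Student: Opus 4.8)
\section*{Proof proposal}

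The plan is to work from the description of $M_{p_n}$ as $\{m\ge 1:\gcd(m,N)=1\}$, where $N=\prod_{i=1}^{n}p_i=L(\mathcal P_{p_n})$, and to identify $\mathcal P_{p_n}$ with the gap sequence of the reduced residues $R=\{r:1\le r\le N,\ \gcd(r,N)=1\}$ listed in increasing order $1=r_1<r_2<\cdots<r_\varphi=N-1$, where $\varphi:=|R|=T_{p_n}$; the last term of the pattern is then the wrap-around gap $(N+1)-r_\varphi=2$, in agreement with Corollary \ref{cor:1}. Every multiplicity statement thereby becomes a count of cyclically consecutive pairs in $R$ (taken mod $N$) with a prescribed difference, and I treat only $p_n\ge3$, the case $p_n=2$ being degenerate.

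For part (i) I would first count skips equal to $2$: a gap of exactly $2$ occurs at $r$ exactly when $r,r+2\in R$ (the intermediate $r+1$, being even, is automatically excluded), so the multiplicity of $2$ equals $\#\{r\bmod N:\gcd(r,N)=\gcd(r+2,N)=1\}$, which by the Chinese Remainder Theorem is $\prod_{3\le p_i\le p_n}(p_i-2)$ --- modulo $2$ one needs $r$ odd, and modulo each odd prime $p_i\mid N$ one avoids the two distinct residues $0$ and $-2$. For skips equal to $4$ the load-bearing observation is that among three consecutive odd numbers $r,r+2,r+4$ exactly one is a multiple of $3$; hence if $r,r+4\in R$ (both coprime to $3$) then $r+2$ is forced to be a multiple of $3$, so $r+2\notin R$, and a gap of exactly $4$ occurs at $r$ if and only if $r,r+4\in R$. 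Thus the multiplicity of $4$ equals $\#\{r\bmod N:\gcd(r,N)=\gcd(r+4,N)=1\}$, and the identical CRT count (modulo $p_i$ one now avoids $0$ and $-4$, still two distinct residues since $p_i\nmid4$) gives $\prod_{3\le p_i\le p_n}(p_i-2)$ again. Hence both multiplicities equal $(3-2)(5-2)\cdots(p_n-2)$, a product of odd numbers and so odd.

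For part (ii) I would invoke Theorem \ref{thm:mirror-symmetry} and Corollary \ref{cor:1} to write $\mathcal P_{p_n}=(g_1,\dots,g_{\varphi-1},2)$ with $(g_1,\dots,g_{\varphi-1})$ a palindrome; since $\varphi=T_{p_n}$ is even for $p_n\ge3$, the length $\varphi-1$ is odd and there is a true central term $g_{\varphi/2}=r_{\varphi/2+1}-r_{\varphi/2}$. The order-reversing involution $r\mapsto N-r$ of $R$ yields $r_i+r_{\varphi+1-i}=N$, hence $r_{\varphi/2}+r_{\varphi/2+1}=N$, so these are the two elements of $R$ nearest to $N/2$, one on each side. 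Now $N/2=\prod_{3\le p_i\le p_n}p_i$ is odd and divisible by every odd prime dividing $N$, while $N/2\pm1$ are even, so none of $N/2-1,\,N/2,\,N/2+1$ lies in $R$; meanwhile $N/2\pm2$ are odd and $\equiv\pm2\not\equiv0$ modulo every odd prime dividing $N$, so both lie in $R$. Therefore $r_{\varphi/2}=N/2-2$ and $r_{\varphi/2+1}=N/2+2$, and the central gap is $4$. Finally, in an odd-length palindrome every value other than the central one occurs an even number of times, so in $(g_1,\dots,g_{\varphi-1})$ every value $\ne4$ has even multiplicity; appending the trailing $2$ changes only the multiplicity of $2$, by one, so every gap $\ne2,4$ retains even multiplicity, as claimed.

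The arithmetic bookkeeping with the CRT is routine; the one step I expect to do real work is the observation that exactly one of $r,r+2,r+4$ is divisible by $3$, since it is precisely what rules out any over-count for skips of $4$ and makes the multiplicity of $4$ coincide exactly with that of $2$. A secondary point needing care is tracking the wrap-around term so that the trailing $2$ of the pattern is counted once, and neither zero nor twice.
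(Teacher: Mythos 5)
Your proof is correct. Note that the paper itself does not prove Theorem \ref{thm:about-skips} at all: it is recalled without proof from the companion paper \cite{Xu-Zhang-Zhou}, so there is no in-text argument to compare against; what you have written is a self-contained justification. The two ideas that carry the argument are exactly the right ones. For (i), the translation of ``multiplicity of a gap $g$ in the pattern'' into a count of residues $r \bmod N$ with $r$ and $r+g$ both coprime to $N=\prod_{i\le n}p_i$ (and nothing of $R$ in between) reduces everything to a Chinese Remainder Theorem computation, and the observation that one of $r,\,r+2,\,r+4$ is divisible by $3$ is precisely what makes the intermediate condition automatic for $g=4$ as well as for $g=2$, so both counts come out to $\prod_{i=2}^{n}(p_i-2)$, an odd number. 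For (ii), locating the central pair of residues as the reflection-symmetric pair $r_{\varphi/2}+r_{\varphi/2+1}=N$ and checking that $N/2\pm2$ are the nearest reduced residues to $N/2$ gives the central gap $4$; the parity statement for all other gaps then falls out of the odd-length palindrome structure from Theorem \ref{thm:mirror-symmetry} together with Corollary \ref{cor:1}. Your handling of the two boundary issues (the wrap-around gap of $2$ being counted exactly once, and the degenerate case $p_n=2$ being excluded) is also appropriate, since for $D_2$ the gap $4$ does not occur and the claimed equality of multiplicities genuinely requires $p_n\ge3$.
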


\begin{prop}\label{prop:2}
The numbers in the set
\[
\{q\in M_{p_{n-1}}\mid p_n<q<p_n p_{n+1},\quad q\neq p_n^2\}
\]
are consecutive primes.
\end{prop}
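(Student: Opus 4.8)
The plan is to identify the set in question explicitly: I claim it is exactly the set of primes $q$ with $p_n<q<p_np_{n+1}$. Once this is shown, the proposition follows at once, because the primes lying in any interval form a consecutive block $p_{n+1},p_{n+2},\ldots,p_m$ of the sequence of all primes (here $p_m$ is the largest prime below $p_np_{n+1}$).

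First I would handle the easy inclusion. If $q$ is a prime with $p_n<q<p_np_{n+1}$, then $q$ is divisible by no prime $\le p_{n-1}$, so $q\in M_{p_{n-1}}$ directly from the definition of $M_{p_{n-1}}$; moreover $q\neq p_n^2$ since $q$ is prime, and $q>p_n$ forces $q\ge p_{n+1}$. Hence every such prime belongs to the set.

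The substantive direction is to show that every element $q$ of the set is prime. Suppose not. Since $q\in M_{p_{n-1}}$, no prime $\le p_{n-1}$ divides $q$, so every prime factor of $q$ is $\ge p_n$. Write $q=rs$ with $r$ the least prime factor; then $r\ge p_n$, and $s=q/r$ again has all prime factors $\ge p_n$ with $s\ge r\ge p_n$. From $q<p_np_{n+1}\le r\,p_{n+1}$ we get $s<p_{n+1}$, so $p_n\le s<p_{n+1}$. But an integer in $[p_n,p_{n+1})$ all of whose prime factors are $\ge p_n$ must equal $p_n$: it cannot be a prime other than $p_n$, since there is none in that range; and it cannot be composite, for a composite with least prime factor $\ge p_n$ is at least $p_n^2$, while $p_n^2\ge 2p_n>p_{n+1}$. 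Therefore $s=p_n$ and $q=r\,p_n$. If $r=p_n$ then $q=p_n^2$, which is excluded; if $r>p_n$ then $r\ge p_{n+1}$, so $q=r\,p_n\ge p_np_{n+1}$, contradicting $q<p_np_{n+1}$. This contradiction shows $q$ is prime, and combining the two directions completes the proof.

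The only delicate step is ruling out composite $q$: it rests on the fact that there is no integer strictly between $p_n$ and $p_{n+1}$ that is free of prime factors $<p_n$, i.e.\ that $p_{n+1}$ is the element of $M_{p_{n-1}}$ immediately following $p_n$. I would derive this from the elementary bound $p_{n+1}<2p_n\le p_n^2$ (Bertrand's postulate), which guarantees the smallest admissible composite, namely $p_n^2$, already lies beyond $p_{n+1}$. Everything else is bookkeeping. I would also note that the exclusion of $p_n^2$ in the statement is genuinely necessary, since $p_n^2\in M_{p_{n-1}}$ and $p_n<p_n^2<p_np_{n+1}$, yet $p_n^2$ is composite.
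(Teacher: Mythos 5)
Your argument is correct: the paper only recalls Proposition \ref{prop:2} from the earlier reference \cite{Xu-Zhang-Zhou} without reproving it, and your proof is the natural sieve argument one would expect there --- every element of $M_{p_{n-1}}$ has all prime factors $\geqslant p_n$, so the only composite member below $p_n p_{n+1}$ is $p_n^2$, which is excluded. You correctly isolate the one nontrivial ingredient, namely $p_{n+1}\leqslant p_n^2$ (via Bertrand's postulate), which is needed to rule out a composite cofactor $s$ in the window $[p_n,p_{n+1})$, and you also verify the converse inclusion so that the set is exactly the block of primes between $p_{n+1}$ and the largest prime below $p_n p_{n+1}$. No gaps.
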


In this paper, we always write $\log x$ to stand for $\ln x$.

\section{Dirichlet's theorem}

We will use the Erd\H{o}s estimate about Chebyshev's inequality to prove Dirichlet's theorem.
\begin{lem}[Chebyshev]
There are two positive constant $c_1\leqslant 1\leqslant c_2$, such that
\[
c_1\frac{x}{\log x}<\pi(x)<c_2\frac{x}{\log x}
\]
holds for all $x\geqslant N$ for some positive integer $N$. If set $N=2$, then we can choose $c_1=\frac{1}{3}\log 2$, and $c_2=6\log 2$.
\end{lem}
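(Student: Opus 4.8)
The plan is to run the classical Erdős argument around the central binomial coefficient $\binom{2n}{n}$, being careful to keep every constant explicit so that the bounds hold all the way down to $x=2$.

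\emph{Lower bound.} First I would record the elementary estimate $\binom{2n}{n}\ge\tfrac{4^n}{2n+1}$, which holds because $\binom{2n}{n}$ is the largest of the $2n+1$ summands $\binom{2n}{k}$ whose total is $4^n$. On the other side, Legendre's formula shows that the exponent of a prime $p$ in $\binom{2n}{n}$ equals $\sum_{j\ge 1}\bigl(\lfloor 2n/p^j\rfloor-2\lfloor n/p^j\rfloor\bigr)$; each summand is $0$ or $1$ and all summands with $p^j>2n$ vanish, so the exponent is at most $\log_p(2n)$ and hence $\binom{2n}{n}\le(2n)^{\pi(2n)}$. Combining the two inequalities and taking logarithms gives $\pi(2n)\ge\frac{2n\log 2-\log(2n+1)}{\log(2n)}$. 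Setting $n=\lfloor x/2\rfloor$ and using $\pi(x)\ge\pi(2n)$, the desired bound $\pi(x)>\tfrac13(\log 2)\,x/\log x$ reduces to an inequality of the shape $\tfrac23 x\log 2\ge\log(2x+2)$, which holds for all $x\ge 6$ by linear-versus-logarithmic growth; for $2\le x<6$ one checks it by hand, using simply $\pi(x)\ge 1$ against $\tfrac13(\log 2)\,x/\log x<1$.

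\emph{Upper bound.} The key input is $\vartheta(x):=\sum_{p\le x}\log p\le(\log 4)x$ for all $x\ge 1$. I would prove this by strong induction on $\lfloor x\rfloor$: for even integers it follows from monotonicity, and for $x=2m+1$ the primes $p$ with $m+1<p\le 2m+1$ all divide $\binom{2m+1}{m}$, which is at most $4^m$ because $2\binom{2m+1}{m}=\binom{2m+1}{m}+\binom{2m+1}{m+1}\le 2^{2m+1}$; hence $\vartheta(2m+1)-\vartheta(m+1)\le m\log 4$ and the induction closes. Then split $\pi(x)=\pi(\sqrt x)+\sum_{\sqrt x<p\le x}1\le\sqrt x+\frac{2\vartheta(x)}{\log x}\le\sqrt x+\frac{4x\log 2}{\log x}$. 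Since $\log x\le\sqrt x$ for every $x\ge 1$ (the minimum of $\sqrt x-\log x$ is $2-\log 4>0$) and $1<2\log 2$, this is at most $\frac{2x\log 2}{\log x}+\frac{4x\log 2}{\log x}=\frac{6x\log 2}{\log x}$, giving $c_2=6\log 2$.

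Both estimates are strict and hold for all $x\ge 2$, and since $\tfrac13\log 2<1<6\log 2$ the listed constants are admissible; the general existence statement for $c_1\le 1\le c_2$ is just a weakening. The \emph{main obstacle} is not the number theory — the divisibility and Legendre estimates for $\binom{2n}{n}$ are routine — but the bookkeeping needed to make the inequalities explicit down to $N=2$: choosing where to split the sum for $\pi(x)$, handling the floor in $n=\lfloor x/2\rfloor$, and disposing of the finitely many small $x$ by direct computation.
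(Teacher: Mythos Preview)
Your argument is the standard Erd\H{o}s binomial-coefficient proof and is essentially correct; the only quibble is that the explicit reduction ``$\tfrac{2}{3}x\log 2\ge\log(2x+2)$'' is a little too optimistic as stated (the honest bound after using $2n>x-2$ and $2n+1\le x+1$ produces $\log(4x+4)$ on the right, which fails for $x$ just above $6$), so the hand-check of small $x$ has to extend slightly further --- say to $x<10$ --- before the asymptotic inequality takes over. This is pure bookkeeping and does not affect the method.

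As for comparison with the paper: there is nothing to compare. The paper states Chebyshev's lemma without proof, treating it as a classical input, and then immediately quotes the sharper Erd\H{o}s constants (Lemma~\ref{eqn:Erdos}) which are what actually get used later. So you have supplied a full proof where the paper supplies none; your approach is exactly the textbook one and is more than adequate for the purpose.
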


Erd\H{o}s \cite{Erdos} proved the following inequality in 1932.
\begin{lem}[Erd\H{o}s]\label{eqn:Erdos}
\[
(\log 2)\cdot\frac{x}{\log x}<\pi(x)<(2\log 2)\cdot\frac{x}{\log x},
\]
for all $x\geqslant 3$.
\end{lem}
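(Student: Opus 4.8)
\noindent\emph{Proof proposal.} The plan is to run Erd\H{o}s's classical argument built on the central binomial coefficient $\binom{2n}{n}$. The two elementary facts I would start from are the sandwich $\dfrac{4^n}{2n+1}\le\binom{2n}{n}\le 4^n$ (lower bound: $\binom{2n}{n}$ is the largest among the $2n+1$ terms summing to $(1+1)^{2n}$; upper bound: it is just one of them) and Legendre's formula for the exponent $e_p$ of a prime $p$ in $\binom{2n}{n}$, namely $e_p=\sum_{j\ge 1}\bigl(\lfloor 2n/p^j\rfloor-2\lfloor n/p^j\rfloor\bigr)$. Each summand is $0$ or $1$ and all of them vanish once $p^j>2n$, so $p^{e_p}\le 2n$ for every prime $p\le 2n$. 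These two facts turn multiplicative information about $\binom{2n}{n}$ into additive information about the primes in $[1,2n]$.

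For the upper bound I would argue through Chebyshev's function $\vartheta(x)=\sum_{p\le x}\log p$. Every prime $p$ with $n<p\le 2n$ divides $\binom{2n}{n}$ exactly once, so $\prod_{n<p\le 2n}p\le\binom{2n}{n}<4^n$, hence $\vartheta(2n)-\vartheta(n)<n\log 4$. A strong induction on $n$ — the case $n$ even being trivial since then $\vartheta(n)=\vartheta(n-1)$, the case $n=2m+1$ using that the primes in $(m+1,2m+1]$ all divide $\binom{2m+1}{m}\le 4^m$ — then gives $\vartheta(x)<(\log 4)x=(2\log 2)x$ for all $x\ge 1$. To reach $\pi$ I would split the primes $\le x$ at a threshold $y$, so that $\pi(x)\le y+\dfrac{1}{\log y}\sum_{y<p\le x}\log p\le y+\dfrac{\vartheta(x)}{\log y}$, choose $y$ of order $x/(\log x)^2$, and finish off the remaining small range by direct computation.

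For the lower bound, the exponent estimate gives $\binom{2n}{n}\le(2n)^{\pi(2n)}$; combined with $\binom{2n}{n}\ge\dfrac{4^n}{2n+1}$ this yields $\pi(2n)\log(2n)\ge 2n\log 2-\log(2n+1)$, which is already $(\log 2)\dfrac{2n}{\log(2n)}$ up to an additive error of size $O(1)$. To remove that error I would sharpen the estimate by separating the primes below $\sqrt{2n}$ (whose total prime-power contribution is at most $(2n)^{\sqrt{2n}}$) from those in $(\sqrt{2n},2n]$ (each dividing $\binom{2n}{n}$ at most once), obtaining $\vartheta(2n)>2n\log 2-O(\sqrt n\log n)$ and hence $\pi(x)\ge\vartheta(x)/\log x>(\log 2)\dfrac{x}{\log x}-O(\sqrt x)$; interpolating from the even integers $2n$ to general real $x$ and checking small $x\ge 3$ by hand completes it.

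I expect the real obstacle to be not the derivation of Chebyshev-type bounds of the \emph{right order of magnitude} — that falls out of $\binom{2n}{n}$ almost immediately — but the bookkeeping required to pin the constants down to exactly $\log 2$ and $2\log 2$ over the whole range $x\ge 3$: converting between $\vartheta$ (or $\psi$) and $\pi$ without bleeding a constant factor, absorbing the logarithmic and $\sqrt{x}$ lower-order terms, and disposing of the finitely many small values of $x$ by explicit calculation.
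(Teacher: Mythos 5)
The paper itself offers no proof of this lemma --- it is imported verbatim from Erd\H{o}s's 1932 paper with only a citation --- so the comparison can only be with the cited source, and your central-binomial-coefficient strategy (the sandwich $4^n/(2n+1)\le\binom{2n}{n}\le 4^n$ together with $p^{e_p}\le 2n$ from Legendre's formula) is indeed the same engine Erd\H{o}s and Chebyshev-type arguments use. Those two starting facts are correct, and they do yield bounds of the right order of magnitude essentially for free, exactly as you say.

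The genuine gap is in the step you defer as ``bookkeeping'': as sketched, \emph{both} halves land on the wrong side of the target, and there is no slack in the main term to absorb the errors because the constants are claimed exactly. For the lower bound you arrive at $\pi(x)>(\log 2)\frac{x}{\log x}-O(\sqrt{x})$; a negative error term of any size makes this strictly weaker than the assertion for every $x$, and nothing in the sketch removes it. For the upper bound, the split $\pi(x)\le y+\vartheta(x)/\log y$ with $y=x/(\log x)^2$ and $\vartheta(x)<(2\log 2)x$ gives $\pi(x)\le(2\log 2)\frac{x}{\log x}\cdot\frac{\log x}{\log x-2\log\log x}+\frac{x}{(\log x)^2}$, and the first factor already \emph{exceeds} $(2\log 2)\frac{x}{\log x}$ for every $x$ since $\log y<\log x$; adding $y$ only worsens it. So this conversion from $\vartheta$ to $\pi$ can never produce the stated constant, for any $x$, and a sharper intermediate inequality or a different normalization is required --- that lossless conversion is the actual content of the lemma. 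A warning sign that the constants really are this tight: the stated lower bound is in fact false for $x\in[4,5)$, where $\pi(x)=2$ while $(\log 2)\,x/\log x\ge 2$, so your planned ``check small $x\ge 3$ by hand'' would uncover a counterexample to the statement as quoted rather than confirm it; any correct writeup must restrict the range or relax the strict inequality.
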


The original proof by Dirichlet in 1837 used the characters of finite abelian groups and L-seires. In this section we give an alternative method to prove the theorem of Dirichlet.

\begin{thm}[Dirichlet, 1837 \cite{Yessenov,Dirichlet-theorem}]
For any two positive coprime integers $a$ and $d$, there are infinitely many primes of the form $a + nd$, where $n$ is a non-negative integer.
\end{thm}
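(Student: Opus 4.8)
The author has built up a combinatorial structure: $M_{p_n}$ is the set of integers surviving after removing multiples of the first $n$ primes, and $D_{p_n}$ is the gap sequence, which is eventually periodic with period $T_{p_n} = \prod(p_i-1)$ and "length" (sum over one period) $L(\mathcal{P}_{p_n}) = \prod p_i$.

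The key insight: the structure $M_{p_n}$ with period $\prod p_i$ relates directly to arithmetic progressions mod $\prod p_i$ — a residue class $a \pmod{\prod p_i}$ with $\gcd(a, \prod p_i) = 1$ corresponds to the positions within one period of $M_{p_n}$ (by CRT), and there are exactly $\prod(p_i - 1) = T_{p_n}$ such positions.

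So to prove Dirichlet, one wants: for $\gcd(a,d)=1$, infinitely many primes $\equiv a \pmod d$.

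**My proof strategy:**

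First, I would reduce to the case where $d$ is a primorial $P_n = \prod_{i=1}^n p_i$. Given arbitrary $d$ with $\gcd(a,d)=1$, choose $n$ large enough that $d \mid P_n$ (actually need $d$'s prime factors among first $n$ primes — then $d \mid P_n^k$ for some $k$; more carefully, pick $n$ so all primes dividing $d$ are $\leq p_n$, and work mod $P_n$ or a suitable multiple). An arithmetic progression mod $d$ splits into progressions mod $P_n$ (or mod $\text{lcm}$), and a coprime class mod $d$ contains at least one coprime class mod $P_n$. So it suffices to handle modulus $P_n$.

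Second — the heart — I would show each coprime residue class mod $P_n$ contains infinitely many primes, using the pattern structure. The elements of $M_{p_n}$ in one period, shifted by multiples of $P_n$, are exactly the integers $> 1$ coprime to $P_n$. By Proposition \ref{prop:2}, among elements of $M_{p_{n-1}}$ in the range $(p_n, p_n p_{n+1})$ all but $p_n^2$ are prime; iterating/sieving further gives that "most" surviving elements are prime. The multiplicities-of-gaps results (Theorem \ref{thm:about-skips}) and mirror symmetry (Theorem \ref{thm:mirror-symmetry}, Corollary \ref{cor:1}) pin down the distribution of the pattern, and combined with the Erdős–Chebyshev bound $\pi(x) \asymp x/\log x$ (Lemma \ref{eqn:Erdos}), I would do a counting argument: there are $T_{p_n} = \prod(p_i-1)$ residue classes, each appearing "equally often" (by the rigid periodicity of $M_{p_n}$), so if one class had only finitely many primes, the total prime count up to $x$ would be at most $\frac{T_{p_n}-1}{P_n}\,x + O(1)$ asymptotically in that residue structure — but one needs this to contradict $\pi(x) > (\log 2)x/\log x$.

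Actually the cleaner route: by periodicity, primes (other than $p_1,\dots,p_n$) lie in the $T_{p_n}$ coprime classes mod $P_n$; if class $a_0$ had finitely many, then $\pi(x) \le \pi(x; \text{other } T_{p_n}-1 \text{ classes}) + C$. Each of the remaining classes has density $1/P_n$ of integers; one would like to say primes distribute roughly evenly, but proving that IS essentially Dirichlet with density — circular. So instead I suspect the author uses a symmetry/bijection argument: mirror symmetry of the pattern (Theorem \ref{thm:mirror-symmetry}) gives an involution pairing residue class $a$ with $-a$ (or $L(\mathcal{P}) + 2 - a$ type reflection) that preserves primality structurally, and periodicity gives a shift; the orbit structure under these maps, combined with a pigeonhole on $\pi(x) \to \infty$, forces every class to be hit infinitely often.

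**The main obstacle.** The genuine difficulty is the last step: moving from "$\pi(x)$ grows like $x/\log x$ and primes sit in $T_{p_n}$ classes" to "*each* class is infinite" without invoking equidistribution. The periodicity of $M_{p_n}$ is about *integers* coprime to $P_n$, not about *primes*, so the pattern alone cannot distinguish classes. The argument must exploit a genuine symmetry among the coprime classes that also respects the sieve that produces primes from $M_{p_n}$ — and here Proposition \ref{prop:2} (primes appearing in controlled windows) together with the mirror symmetry is the only available lever. I expect the proof to construct, for a putative prime-free class $a_0$, an explicit symmetry of the pattern carrying a prime-rich region onto the $a_0$-region, deriving a contradiction with the lower Chebyshev bound. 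Verifying that this symmetry genuinely respects primality — not just the coprimality pattern — will be the crux, and I'd watch carefully whether the argument secretly needs more than Theorems \ref{thm:mirror-symmetry}–\ref{thm:about-skips} provide.
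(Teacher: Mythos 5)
Your proposal correctly maps the combinatorial setup onto residue classes modulo the primorial and correctly isolates the crux: the periodicity and mirror symmetry of the pattern are facts about the integers coprime to $\prod_{i=1}^{n} p_i$, not about the primes among them, so no structural symmetry of $M_{p_n}$ by itself can distinguish a putatively prime-poor class from a prime-rich one. But you stop exactly at that crux. Your candidate mechanism --- an involution coming from Theorem \ref{thm:mirror-symmetry} that ``preserves primality structurally'' --- does not exist: the reflection of the pattern preserves membership in $M_{p_n}$ but has no reason to carry primes to primes, which is precisely the obstruction you yourself flagged. As written, the proposal is a statement of the problem plus a conjectural strategy; the decisive step from the Chebyshev--Erd\H{o}s bound to ``each coprime class contains infinitely many primes'' is never carried out.

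The paper closes this gap by a different device: a sieve-counting contradiction rather than a symmetry. Assuming some coprime class contains only finitely many primes, all entries of that column beyond some point are composite. The paper then fixes two adjacent columns $L_u$, $L_v$ at distance $2$, uses Theorem \ref{thm:about-skips} to count the $(3-2)(5-2)\cdots(p_m-2)$ gap-$2$ pairs in the pattern $\mathcal{P}_{p_m}$, observes that each such pair of composites at distance $2$ must be eliminated by primes strictly between $p_m$ and $\sqrt{X/2}$ (where $X=1+\prod_{i=1}^m p_i$), and converts this into the counting inequality \eqref{eqn:main-inequality}. Mertens' estimate (Lemma \ref{lem:Merten-estimate}) then bounds the left-hand side below by a quantity of the form $c\,X/\log X$ with $c>2\log 2$ for suitable $p_m$ (enlarging the base pattern from $M_7$ to $M_{p_n}$ when needed), contradicting the Erd\H{o}s upper bound of Lemma \ref{eqn:Erdos}. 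Whether or not one finds that argument airtight (it leans on an unproved Claim and on the passage from ``one column is eventually composite'' to the count of composite pairs on $L_u$ and $L_v$), it supplies exactly the quantitative input your proposal lacks: a mechanism tying the composite pairs in a specific residue class to the global prime count. Without that, or an equivalent, your attempt is incomplete.
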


\begin{proof}
First, for the convenience, we just copy the figure from \cite{Xu-Zhang-Zhou} here.

\begin{figure}[htbp]
\centering
\begin{tabular}{|c|c|c|c|c|c|c|c|c|c|c|}
\hline
1& {\bf\color{red}\sout{7}} & 11 & 13 & 17 & 19 & 23 & 29 & 31 & {\it 37} & {\it 41} \\
\hline
 & {\it 43} & {\it 47} & {\bf\color{blue}\sout{49}} & {\it 53} & {\it 59} & {\it 61}& {\bf 67} & {\bf 71} & {\bf 73} & {\bf\color{blue}\sout{77}} \\
 & {\bf 79} & {\bf 83} & {\bf 89} & {\bf\color{blue}\sout{91}} & {\it 97} & {\it 101}&{\it 103} &{\it 107}&{\it 109} &{\it 113} \\
$M_7^{(0)}$ &{\bf\color{blue}\sout{119}} &{\it 121} &{\bf 127} &{\bf 131} &{\bf\color{blue}\sout{133}} &{\bf 137} &{\bf 139} &{\bf 143}&{\bf 149} & {\bf 151} \\
 &{\it 157} &{\bf\color{blue}\sout{161}} &{\it 163} &{\it 167} &{\it 169} &{\it 173} &{\it 179} &{\it 181}&{\bf 187} & {\bf 191} \\
 &{\bf 193} &{\bf 197} &{\bf 199} &{\bf\color{blue}\sout{203}} & {\bf 209} & {\bf 211}& & & & \\
\hline
\hline
 & {\bf\color{red}\sout{217}} & 221 & 223 & 227 & 229 & 233 & 239 & 241 & 247 & 251 \\
 & 253 & 257 & {\bf\color{blue}\sout{259}} & 263 & 269 & 271 & 277 & 281 & 283 & {\bf\color{blue}\sout{287}} \\
 & 289 & 293 & 299 & {\bf\color{blue}\sout{301}} & 307 & 311 & 313 & 317 & 319 & 323 \\
$M_7^{(1)}$ & {\bf\color{blue}\sout{329}} & 331 & 337 & 341 & {\bf\color{blue}\sout{343}} & 347 & 349 & 353 & 359 & 361 \\
 & 367 & {\bf\color{blue}\sout{371}} & 373 & 377 & 379 & 383 & 389 & 391 & 397 & 401 \\
 & 403 & 407 & 409 & {\bf\color{blue}\sout{413}} & 419 & 421 & & & & \\
\hline
\hline
 & {\bf\color{red}\sout{427}} & 431 & 433 & 437 & 439 & 443 & 449 & 451 & 457 & 461 \\
 & 463 & 467 & {\bf\color{blue}\sout{469}} & 473 & 479 & 481 & 487 & 491 & 493 & {\bf\color{blue}\sout{497}} \\
 & 499 & 503 & 509 & {\bf\color{blue}\sout{511}} & 517 & 521 & 523 & 527 & 529 & 533 \\
$M_7^{(2)}$ & {\bf\color{blue}\sout{539}} & 541 & 547 & 551 & {\bf\color{blue}\sout{553}} & 557 & 559 & 563 & 569 & 571 \\
 & 577 & {\bf\color{blue}\sout{581}} & 583 & 587 & 589 & 593 & 599 & 601 & 607 & 611 \\
 & 613 & 617 & 619 & {\bf\color{blue}\sout{623}} & 629 & 631 & & & & \\
\hline
\hline
 & $\cdots$ & & & & & & & & & \\
\end{tabular}
\caption{Obtain $M_{7}$ from $M_5$ by deleting the $7h$, $h\in M_5$}
\label{fig:M7}
\end{figure}

\begin{figure}[htbp]
\centering
\begin{tabular}{|c|c|c|c|c|c|c|c|c|c|}
\hline
1 & 11 & 13 & 17 & 19 & 23 & 29 & \ldots & 209 & 211 \\
\hline
 & 221 & 223 & 227 & 229 & 233 & 239 & \ldots & 419 & 421 \\
\hline
 & 431 & 433 & 437 & 439 & 443 & 449 & \ldots & 629 & 631 \\
\hline
 & $\cdots$ & & & & & & & & \\
\end{tabular}
\caption{$M_{7}$}
\label{fig:M7-new}
\end{figure}

We look at the columns of Figure \ref{fig:M7-new}, $L(\mathcal{P}_7)=7\cdot 5\cdot 3\cdot 2=210$. Thus, the columns consist the arithmetic progressions:
\[
x_{ik}=a_i+(k-1)\cdot 210,
\]
where $a_i\in M_7^{(0)}=\{11,13,17,19,23,29,31,37,41,43,47,\ldots,209,211\}$.

First, there exists at least one number $a_{i_0}\in M_7^{(0)}$ such that the set $\{x_{i_0 k}\mid x_{i_0 k}=a_{i_0}+(k-1)210\}$ contains infinitely many primes since there are infinitely many primes. Suppose that all $a_j\in M_7^{(0)}$ except $a_{i_0}$ such that each set $\{x_{jk}\mid x_{jk}=a_{j}+(k-1)210\}$ contains finite primes. That is for $k>K$ large enough, the number $x_{jk}=a_{j}+(k-1)210$ is always a composite number. Let $L_{a_{i_0}}$ denote the column or line where the set $\{x_{i_0 k}\mid x_{i_0 k}=a_{i_0}+(k-1)210\}$ is located in Figure \ref{fig:M7-new}. Choose a prime $p_m$ on the line $L_{a_{i_0}}$ satisfying $p_m>a_{i_0}+210(K-1)$. Then by Proposition \ref{prop:2}, the first several items in $M_{p_m}$ will not contain the gap (or skip) $2$.

But every pattern $\mathcal{P}_{p_m}$ has skip 2. In fact by Theorem \ref{thm:about-skips}, there are
\[
(3-2)(5-2)(7-2)\cdots(p_m-2)
\]
gaps of skip $2$ in $\mathcal{P}_{p_m}$. We can choose some of the corresponding pair of composite numbers in $\mathcal{P}_{p_m}$ which are located in the adjacent two columns denoted by $L_u$ and $L_v$. Here $u,v\in M_7^{(0)}$ and $v-u=2$.

Note that there are $15=(3-2)(5-2)(7-2)$ gaps of $2$ in $D_{7}$. And the same number of gap $4$ in $D_{7}$. Hence, we can assume that $L_u$ and $L_v$ are not the column passing through $a_{i_0}$.

Since every two odd composite numbers with gap $2$ are coprime, the elements at the same level of $L_u$ and $L_v$(i.e., they are the form like $u+210(k_s-1)$ and $v+210(k_s-1)$) have different prime divisors.

Hence, without loss of generality, we can choose $L_u$ and $L_v$ such that on them there are at least
\[
N:=\biggl[(3-2)(5-2)(7-2)\cdots(p_m-2)\cdot\frac{1}{15}\biggr]
\]
pairs of composite numbers with gap $2$ in $M_{p_m}^{(0)}$. Each pair of such composite numbers have at least four distinct prime divisors. In the Figure \ref{fig:main}, $*$ is a central position (not a number) in $M_{p_m}^{(0)}$. For the pairs of red points in the figure, they only can be eliminated by the primes in the set $\{p_{m+1},p_{m+2},\ldots,p_{m+k}\}$. Here $p_{m+k}$ is the largest prime less than $\sqrt{X/2}$, and $X=1+L(\mathcal{P}_{p_n})=1+\prod_{i=1}^{m}p_i$ is the last number in $M_{p_m}^{(0)}$.

There are at least $N-\frac{\sqrt{X/2}}{210}$ pair of red points with skip $2$ on the lines $L_u$ and $L_v$. They should be eliminated by these primes $p_{m+1},p_{m+2},\ldots,p_{m+k}$.

\begin{figure}[htbp]
  \centering
\begin{tikzpicture}[scale=1]

\draw[gray] (0,10) -- (0,0);
\draw[gray] (0,0) -- (5,0);
\draw[gray] (5,0) -- (5,10);
\draw[gray] (5,10) -- (0,10);

\draw[gray] (0,8) -- (5,8);
\draw[gray] (0,4) -- (5,4);
\draw[gray,dashed] (0,6) -- (5,6);

\draw[gray,dashed] (0,7.99) -- (5,7.99);
\draw[gray,dashed] (0,4.01) -- (5,4.01);
\draw[gray,dashed] (0.01,4.01) -- (0.01,7.99);
\draw[gray,dashed] (4.99,4.01) -- (4.99,7.99);

\draw[gray,dashed] (0,6.75) -- (5,6.75);

\draw[gray] (2,10) -- (2,0); 
\draw[gray] (3,10) -- (3,0); 
\draw[gray] (3.5,10) -- (3.5,0);     

\draw[gray] (3,10.4) -- (3,10.6);
\draw[gray] (3.5,10.4) -- (3.5,10.6);
\draw[gray][<->] (3,10.5) -- (3.5,10.5);
\node  at (3.25,10.7) {\footnotesize $2$};

\node  at (0,7.5)[anchor=east] {\footnotesize $M_{p_m}$};
\node  at (2,7.8)[anchor=east] {\footnotesize $p_{m+1}$};
\node  at (2,7.6)[anchor=east] {\footnotesize $p_{m+2}$};
\node  at (1.75,7.35)[anchor=east] {\footnotesize $\vdots$};
\node  at (2,6.9)[anchor=east] {\footnotesize $p_{m+k}$};
\node  at (2.5,6.9) {\footnotesize $<\sqrt{X/2}$};

\node  at (3,10.4)[anchor=north] {\footnotesize $L_u$};
\node  at (3.5,10.4)[anchor=north] {\footnotesize $L_v$};

\node  at (2.5,6) {\footnotesize $*$};
\node  at (5,4.2)[anchor=east] {\footnotesize $X$};

\fill [black] ($(0.5,9.8)$) circle (1.5pt);
\fill [black] ($(0.5,9.3)$) circle (1.5pt);
\fill [black] ($(0.5,8.6)$) circle (1.5pt);

\fill [black] ($(1,9.6)$) circle (1.5pt);
\fill [black] ($(1,8.9)$) circle (1.5pt);
\fill [black] ($(1,8.3)$) circle (1.5pt);
\fill [black] ($(1,8.6)$) circle (1.5pt);

\fill [black] ($(1.5,9.3)$) circle (1.5pt);
\fill [black] ($(1.5,8.8)$) circle (1.5pt);

\fill [black] ($(2.5,9.8)$) circle (1.5pt);
\fill [black] ($(2.5,8.9)$) circle (1.5pt);
\fill [black] ($(2.5,8.2)$) circle (1.5pt);

\fill [black] ($(3,9.5)$) circle (1.5pt);
\fill [black] ($(3,8.7)$) circle (1.5pt);
\fill [black] ($(3,8.4)$) circle (1.5pt);

\fill [black] ($(3.5,9.6)$) circle (1.5pt);
\fill [black] ($(3.5,9.3)$) circle (1.5pt);
\fill [black] ($(3.5,8.5)$) circle (1.5pt);

\fill [black] ($(4,9.4)$) circle (1.5pt);
\fill [black] ($(4,8.6)$) circle (1.5pt);
\fill [black] ($(4,8.2)$) circle (1.5pt);

\fill [black] ($(4.5,9.7)$) circle (1.5pt);
\fill [black] ($(4.5,9.2)$) circle (1.5pt);
\fill [black] ($(4.5,8.3)$) circle (1.5pt);
\fill [black] ($(2,9.8)$) circle (1.5pt);
\fill [black] ($(2,9.3)$) circle (1.5pt);
\fill [black] ($(2,8.8)$) circle (1.5pt);
\fill [black] ($(2,8.3)$) circle (1.5pt);
\fill [black] ($(2,7.8)$) circle (1.5pt);
\fill [black] ($(2,7.6)$) circle (1.5pt);
\fill [black] ($(2,7.2)$) circle (1.5pt);
\fill [black] ($(2,6.9)$) circle (1.5pt);
\fill [black] ($(2,6.6)$) circle (1.5pt);
\fill [black] ($(2,6.3)$) circle (1.5pt);
\fill [black] ($(2,5.8)$) circle (1.5pt);
\fill [black] ($(2,5.6)$) circle (1.5pt);
\fill [black] ($(2,5.3)$) circle (1.5pt);
\fill [black] ($(2,4.8)$) circle (1.5pt);
\fill [black] ($(2,4.6)$) circle (1.5pt);
\fill [black] ($(2,4.3)$) circle (1.5pt);
\fill [black] ($(2,3.8)$) circle (1.5pt);
\fill [black] ($(2,3.6)$) circle (1.5pt);
\fill [black] ($(2,3.3)$) circle (1.5pt);
\fill [black] ($(2,2.8)$) circle (1.5pt);
\fill [black] ($(2,2.6)$) circle (1.5pt);
\fill [black] ($(2,2.3)$) circle (1.5pt);
\fill [black] ($(2,1.8)$) circle (1.5pt);
\fill [black] ($(2,1.6)$) circle (1.5pt);
\fill [black] ($(2,1.3)$) circle (1.5pt);
\fill [black] ($(2,0.8)$) circle (1.5pt);
\fill [black] ($(2,0.6)$) circle (1.5pt);
\fill [black] ($(2,0.3)$) circle (1.5pt);

\fill [blue] ($(3,  7.8)$) circle (2pt);
\fill [blue] ($(3.5,7.8)$) circle (2pt);
\fill [blue] ($(3,  7.5)$) circle (2pt);
\fill [blue] ($(3.5,7.5)$) circle (2pt);
\fill [blue] ($(3,  7.1)$) circle (2pt);
\fill [blue] ($(3.5,7.1)$) circle (2pt);
\fill [red] ($(3,  6.5)$) circle (2pt);
\fill [red] ($(3.5,6.5)$) circle (2pt);
\fill [red] ($(3,  6.2)$) circle (2pt);
\fill [red] ($(3.5,6.2)$) circle (2pt);
\fill [red] ($(3,  5.6)$) circle (2pt);
\fill [red] ($(3.5,5.6)$) circle (2pt);
\fill [red] ($(3,  5.3)$) circle (2pt);
\fill [red] ($(3.5,5.3)$) circle (2pt);
\fill [red] ($(3,  5.0)$) circle (2pt);
\fill [red] ($(3.5,5.0)$) circle (2pt);
\fill [red] ($(3,  4.5)$) circle (2pt);
\fill [red] ($(3.5,4.5)$) circle (2pt);
\fill [red] ($(3,  4.2)$) circle (2pt);
\fill [red] ($(3.5,4.2)$) circle (2pt);

\end{tikzpicture}
  \caption{Here $*$ is the center of the $M_{p_m}^{(0)}$}\label{fig:main}
\end{figure}

During the process of the elimination by the primes, we should take into account the composite numbers with same prime divisors. Note that the largest value of the red points is greater or equal than
\begin{equation}\label{eqn:main}
210\cdot\biggl(\frac{1}{15}\prod_{i=2}^{m}(p_i-2)-\frac{\sqrt{X/2}}{210}\biggr)+\sqrt{X/2}=14\prod_{i=2}^{m}(p_i-2).
\end{equation}


Thus we should have the following inequality
\begin{equation}\label{eqn:main-inequality}
14\prod_{i=2}^{m}(p_i-2)\cdot(1-\frac{1}{p_{m+1}})(1-\frac{1}{p_{m+2}})\cdots(1-\frac{1}{p_{m+k}})\leqslant\pi(X)-m.
\end{equation}
Note that the left hand side
\[
\begin{split}
14\prod_{i=2}^{m}(p_i-2)\cdot\prod_{i=m+1}^{m+k}(1-\frac{1}{p_{i}})
&=28\prod_{i=2}^{m}\frac{p_i-2}{1-\frac{1}{p_{i}}}\cdot\prod_{i=1}^{m+k}(1-\frac{1}{p_{i}})\\
&=28\cdot\prod_{i=2}^{m}p_i\cdot\prod_{i=2}^{m}(1-\frac{1}{p_i-1})\cdot\prod_{i=1}^{m+k}(1-\frac{1}{p_i})\\
&>14\cdot\prod_{i=2}^{m}p_i\cdot\prod_{i=2}^{m}(1-\frac{1}{p_i})\cdot\prod_{i=1}^{m+k}(1-\frac{1}{p_i})\\
&=14\cdot\prod_{i=1}^{m}p_i\cdot\prod_{i=1}^{m}(1-\frac{1}{p_i})\cdot\prod_{i=1}^{m+k}(1-\frac{1}{p_i})\\
\end{split}
\]
By Merten's estimates\cite{Hildebrand} in Lemma \ref{lem:Merten-estimate}, we have
\[
\begin{split}
&14\cdot\prod_{i=1}^{m}p_i\cdot\prod_{i=1}^{m}(1-\frac{1}{p_i})\cdot\prod_{i=1}^{m+k}(1-\frac{1}{p_i})\\
=&14(X-1)\cdot\frac{e^{-\gamma}}{\log p_m}\Bigl(1+O(\frac{1}{\log p_m})\Bigr)\cdot\frac{e^{-\gamma}}{\log p_{m+k}}\Bigl(1+O(\frac{1}{\log p_{m+k}})\Bigr)\\
>&13X\cdot\frac{e^{-\gamma}}{\log p_m}\cdot\frac{e^{-\gamma}}{\log p_{m+k}}\\
>&\frac{13}{e^{2\gamma}\log p_m}\cdot\frac{X}{\log\sqrt{X/2}}\\
>&\frac{26}{e^{2\gamma}\log p_m}\cdot\frac{X}{\log X}.
\end{split}
\]
It will violate the inequality \eqref{eqn:main-inequality} if the prime number $p_m$ satisfies $\frac{26}{e^{2\gamma}\log p_m}\geqslant 2\log 2$ due to the inequality \eqref{eqn:Erdos}.

 Note that $e^{\gamma}\approx 1.78107241799$ \cite{Euler-constant}. Let $p_m\leqslant e^{5.9}\approx 365.04$, then we will have
\[
\frac{26}{e^{2\gamma}\log p_m}> 2\log 2.
\]

  If $p_m>e^{5.9}$, we change $M_7$ to some $M_{p_n}$ with longer period. Then the coefficient in \eqref{eqn:main} will be $\prod_{i=1}^{n}p_i/\prod_{i=2}^{n}(p_i-2)$.

Next we follow the same arguments. In detail, equation \eqref{eqn:main} becomes
\[
\prod_{i=1}^{n}p_i\cdot\biggl(\frac{1}{\prod_{i=2}^{n}(p_i-2)}\prod_{i=2}^{m}(p_i-2)-\frac{\sqrt{X/2}}{\prod_{i=1}^{n}p_i}\biggr)+\sqrt{X/2}=\frac{\prod_{i=1}^{n}p_i}{\prod_{i=2}^{n}(p_i-2)}\cdot\prod_{i=2}^{m}(p_i-2).
\]
And \eqref{eqn:main-inequality} becomes
\begin{equation}\label{eqn:main-inequality2}
\frac{\prod_{i=1}^{n}p_i}{\prod_{i=2}^{n}(p_i-2)}\cdot\prod_{i=2}^{m}(p_i-2)\cdot(1-\frac{1}{p_{m+1}})(1-\frac{1}{p_{m+2}})\cdots(1-\frac{1}{p_{m+k}})\leqslant\pi(X)-m.
\end{equation}

The left hand side becomes
\[
\begin{split}
&\frac{\prod_{i=1}^{n}p_i}{\prod_{i=2}^{n}(p_i-2)}\cdot\prod_{i=2}^{m}(p_i-2)\cdot\prod_{i=m+1}^{m+k}(1-\frac{1}{p_{i}})\\
=&2\cdot\prod_{i=2}^{n}\frac{p_i}{p_i-2}\cdot\prod_{i=2}^{m}\frac{p_i-2}{1-\frac{1}{p_{i}}}\cdot\prod_{i=2}^{m+k}(1-\frac{1}{p_{i}})\\
=&4\cdot\prod_{i=2}^{n}\frac{p_i}{p_i-2}\cdot\prod_{i=2}^{m}p_i\cdot\prod_{i=2}^{m}(1-\frac{1}{p_i-1})\cdot\prod_{i=1}^{m+k}(1-\frac{1}{p_i})\\
>&2\cdot\prod_{i=2}^{n}\frac{p_i}{p_i-2}\cdot\prod_{i=2}^{m}p_i\cdot\prod_{i=2}^{m}(1-\frac{1}{p_i})\cdot\prod_{i=1}^{m+k}(1-\frac{1}{p_i})\\
=&2\cdot\prod_{i=2}^{n}\frac{p_i}{p_i-2}\cdot\prod_{i=1}^{m}p_i\cdot\prod_{i=1}^{m}(1-\frac{1}{p_i})\cdot\prod_{i=1}^{m+k}(1-\frac{1}{p_i}).\\
\end{split}
\]
By Merten's estimates, we have
\[
\begin{split}
&2\cdot\prod_{i=2}^{n}\frac{p_i}{p_i-2}\cdot\prod_{i=1}^{m}p_i\cdot\prod_{i=1}^{m}(1-\frac{1}{p_i})\cdot\prod_{i=1}^{m+k}(1-\frac{1}{p_i})\\
=&2\cdot\prod_{i=2}^{n}\frac{p_i}{p_i-2}\cdot(X-1)\cdot\frac{e^{-\gamma}}{\log p_m}\cdot\biggl(1+O(\frac{1}{\log p_m})\biggr)\cdot\frac{e^{-\gamma}}{\log p_{m+k}}\cdot\biggl(1+O(\frac{1}{\log p_{m+k}})\biggr)\\
>&(2\prod_{i=2}^{n}\frac{p_i}{p_i-2}-1)X\cdot\frac{e^{-2\gamma}}{\log p_m\cdot\log p_{m+k}}\\
>&(2\prod_{i=2}^{n}\frac{p_i}{p_i-2}-1)X\cdot\frac{1}{e^{2\gamma}\log p_m}\cdot\frac{1}{\log\sqrt{X/2}}\\
>&\frac{2(2\prod_{i=2}^{n}\frac{p_i}{p_i-2}-1)}{e^{2\gamma}\log p_m}\cdot\frac{X}{\log X}=:\alpha\cdot\frac{X}{\log X}.
\end{split}
\]

\noindent{\bf Claim.} For this $p_m$, there exists a number $n$ with $p_n < p_m$, such that
\[
2(2\prod_{i=2}^{n}\frac{p_i}{p_i-2}-1)> (2\log 2)\cdot e^{2\gamma}\log p_m.
\]
That is, $\alpha > 2\log 2$.

Hence, if denote the left hand side of \eqref{eqn:main-inequality2} as $LHS$, then
\[
LHS>(2\log 2)\frac{X}{\log X}>\pi(X),
\]
which violates the inequality \eqref{eqn:main-inequality2}.

Thus our assumption is wrong. Note that here we work on the table of $M_{p_n}$ similar with Figure \ref{fig:M7-new} and Figure \ref{fig:main}. Hence, there exists at least another $a_{j_0}$ such that the set $\{x_{j_0 k}\mid x_{j_0 k}=a_{j_0}+(k-1)\prod_{i=1}^{n}p_i\}$ contains infinitely many primes.

{\bf In fact} by above argument we have proved that {\bf each line of the pair} $L_u$ and $L_v$ with gap $2$ or $4$ in the table about $M_{p_n}$ contains infinitely many primes.

Obviously, it infers that {\bf each line of the pair} $L_u$ and $L_v$ with gap $2$ or $4$ in the table about $M_{7}$ (see Figure \ref{fig:main}) contains infinitely many primes. And it also true that for the table of $M_3$ and $M_5$. Therefore, since the bigger gap such as $6$ comes from add $2$ and $4$, we conclude that the pair lines $L_u$ and $L_v$ with gap $6$ also both contains infinitely many primes.

Therefore, every column $\{x_{j k}\mid x_{j k}=a_{j}+(k-1)\prod_{i=1}^{n}p_i\}$ (the $j$-th column) contains infinitely many primes. Here $n=1,2,3,\ldots$.

By using the same idea, we can prove that if fix one of rows of the block $M_7$ in Figure \ref{fig:M7-new}, then the set
\begin{equation}\label{eqn:2}
\{x_{in}\mid x_{in}=a_{i}+n(k_0-1)\cdot 210,\quad n=1,2,3,\ldots\}
\end{equation}
also contains infinitely many primes.

For the general arithmetic progression $A:=\{ak+d\mid k\in\mathbb{N}\}$, $(a,d)=1$. First we can choose suitable $p_k$ such that $d$ or $ah+d$ belongs the set $M_{p_k}$ for some $h$. Then fix this $p_k$, considering the similar rectangle as Figure \ref{fig:M7-new}. We consider the subset
\[
\{a(\prod_{i=1}^{n}p_i)k+d\mid k=1,2,\ldots\}.
\]
Thus it contains infinitely many primes. It completes the proof of the theorem of Dirichlet.
\end{proof}

\bigskip

\section{Estimation of the prime counting function in special cases}

In $M_{p_m}^{(0)}$, there are $T_{p_m}$ elements in it. Then after deleting the numbers of the form $p_ih$, $i=m+1,m+2,\ldots,m+K$, the left are all prime numbers. Here we assume $p_{m+K}$ is the largest prime number which is less than or equal to $X:=L(\mathcal{P}_{p_m})=1+\prod_{i=1}^{m}p_i$. Hence we have
\[
\begin{split}
\pi(X)-m&\geqslant T_{p_m}\cdot(1-\frac{1}{p_{m+1}})\cdot(1-\frac{1}{p_{m+2}})\cdots(1-\frac{1}{p_{m+K}})\\
&=\prod_{i=1}^{m}(p_i-1)\cdot\frac{\prod_{i=1}^{m+K}(1-p_{i}^{-1})}{\prod_{i=1}^{m}(1-p_{i}^{-1})}\\
&=\prod_{i=1}^{m}p_i\cdot\prod_{i=1}^{m+K}(1-p_{i}^{-1})
\end{split}
\]

Recall the Euler's product formula for zeta function
\[
\zeta(s)=\sum_{n=1}^{\infty}\frac{1}{n^s}=\prod_{p\ \mathrm{prime}}\frac{1}{1-p^{-s}}.
\]
For the case $s=1$, we will use the following lemma.
\begin{lem}[Merten's estimates\cite{Hildebrand}]\label{lem:Merten-estimate}
\[
\prod_{p\leqslant x}(1-\frac{1}{p})=\frac{e^{-\gamma}}{\log x}\biggl(1+O(\frac{1}{\log x})\biggr),
\]
where $\gamma$ is Euler's constant.
\end{lem}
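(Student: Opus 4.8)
The plan is to pass to logarithms and reduce the product to Mertens' second theorem together with an absolutely convergent correction series. Writing
\[
\log\prod_{p\leqslant x}\Bigl(1-\frac1p\Bigr)=\sum_{p\leqslant x}\log\Bigl(1-\frac1p\Bigr)=-\sum_{p\leqslant x}\frac1p-\sum_{p\leqslant x}\sum_{k\geqslant 2}\frac{1}{k\,p^{k}},
\]
I observe that the double sum converges as $x\to\infty$ to a constant $C=\sum_{p}\sum_{k\geqslant 2}\frac{1}{k\,p^{k}}$, with tail $\sum_{p>x}\sum_{k\geqslant 2}\frac{1}{k p^{k}}=O(1/x)=O(1/\log x)$. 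Granting Mertens' second theorem in the quantitative form $\sum_{p\leqslant x}\frac1p=\log\log x+M+O(1/\log x)$, combining these gives
\[
\log\prod_{p\leqslant x}\Bigl(1-\frac1p\Bigr)=-\log\log x-(M+C)+O\Bigl(\frac{1}{\log x}\Bigr),
\]
and exponentiating yields $\prod_{p\leqslant x}(1-1/p)=\frac{e^{-(M+C)}}{\log x}\bigl(1+O(1/\log x)\bigr)$. Thus everything reduces to (a) proving Mertens' second theorem with this error, and (b) identifying the constant, namely $M+C=\gamma$.

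For step (a), I would first establish Mertens' first theorem $\sum_{p\leqslant x}\frac{\log p}{p}=\log x+O(1)$ by comparing the Legendre formula $\log n!=\sum_{p\leqslant n}\bigl(\sum_{k\geqslant1}\lfloor n/p^{k}\rfloor\bigr)\log p$ with Stirling's estimate $\log n!=n\log n-n+O(\log n)$; the prime-power terms with $k\geqslant 2$ and the floor errors contribute only $O(n)$, so after dividing by $n$ one obtains the first theorem. Then, setting $A(t)=\sum_{p\leqslant t}\frac{\log p}{p}=\log t+R(t)$ with $R(t)=O(1)$, Abel summation applied to the factorization $\frac1p=\frac{\log p}{p}\cdot\frac{1}{\log p}$ gives
\[
\sum_{p\leqslant x}\frac1p=\int_{2^{-}}^{x}\frac{dA(t)}{\log t}=\log\log x+M+O\Bigl(\frac1{\log x}\Bigr),
\]
where the constant $M$ arises from the convergent integral $\int_{2}^{\infty}\frac{R(t)}{t(\log t)^{2}}\,dt$ together with the boundary contributions.

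The crux is step (b), the identification $M+C=\gamma$, and this is the step I expect to be the main obstacle, since it is precisely what ties the arithmetic constants to the analytic constant $\gamma$. I would use the Euler product in the form $\log\zeta(s)=\sum_{p}\frac{1}{p^{s}}+g(s)$, where $g(s)=\sum_{p}\sum_{k\geqslant2}\frac{1}{k p^{ks}}\to C$ as $s\to1^{+}$. On one side, the Laurent expansion $\zeta(s)=\frac{1}{s-1}+\gamma+O(s-1)$ gives $\log\zeta(s)=-\log(s-1)+o(1)$. On the other side, feeding $\sum_{p\leqslant t}\frac1p=\log\log t+M+o(1)$ into the Abel-summation identity $\sum_{p}p^{-s}=(s-1)\int_{2}^{\infty}\bigl(\sum_{p\leqslant t}\frac1p\bigr)t^{-s}\,dt$ and making the substitution $t=e^{y}$, $w=(s-1)y$ reduces the main term to $\int_{0}^{\infty}(\log w-\log(s-1))e^{-w}\,dw=-\gamma-\log(s-1)$, where I invoke $\Gamma'(1)=\int_{0}^{\infty}(\log w)e^{-w}\,dw=-\gamma$. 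Hence $\sum_{p}p^{-s}=-\log(s-1)-\gamma+M+o(1)$, so $\log\zeta(s)=-\log(s-1)+(M+C-\gamma)+o(1)$. Comparing with $-\log(s-1)+o(1)$ forces $M+C=\gamma$, which is exactly what is needed to conclude $e^{-(M+C)}=e^{-\gamma}$ and complete the lemma. The delicate points will be justifying the interchange of limit and summation in $g(s)\to C$ (uniform convergence for $s\geqslant1$) and controlling the $o(1)$ error in the Abel-summation computation uniformly as $s\to1^{+}$.
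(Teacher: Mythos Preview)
Your plan is sound and follows the classical route to Mertens' third theorem: reduce to the second theorem via the Taylor expansion of $\log(1-1/p)$, derive the second theorem from the first by Abel summation, derive the first from Stirling via Legendre's formula, and finally pin down the constant $M+C=\gamma$ by comparing two asymptotic expansions of $\log\zeta(s)$ as $s\to 1^{+}$ and invoking $\Gamma'(1)=-\gamma$. The details you flag as delicate (uniform convergence of $g(s)\to C$, and the $o(1)$ control in the Abel computation) are genuine but routine, and your substitution $w=(s-1)y$ to extract $-\gamma-\log(s-1)$ from the main integral is exactly the right maneuver.

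However, there is nothing to compare against: the paper does not prove this lemma at all. It is quoted as a black box with a citation to Hildebrand's lecture notes, and is then used downstream in the estimates for $\pi(X)$ and in the contradiction argument for Dirichlet's theorem. So you have supplied substantially more than the paper does here; your argument is self-contained whereas the paper simply imports the result.
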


\begin{lem}
For any positive integer $N$, we have the inequality
\[
\log(1+N)<\sum_{n=1}^{N}\frac{1}{n}<1+\log N.
\]
\end{lem}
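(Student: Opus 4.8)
The plan is to read $H_N := \sum_{n=1}^{N}\frac{1}{n}$ as a Riemann sum for $\int \frac{dx}{x}$ and exploit that $x\mapsto 1/x$ is strictly decreasing on $(0,\infty)$, so that a constant value $1/n$ over a unit interval overestimates or underestimates $\int \frac1x\,dx$ over that interval depending on whether $n$ is the right or left endpoint. Concretely, I would prove the two inequalities separately by comparing $H_N$ with $\int_1^{N+1}\frac{dx}{x}=\log(N+1)$ and with $1+\int_1^{N}\frac{dx}{x}=1+\log N$.

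For the lower bound: on each interval $[n,n+1]$ with $n\geq 1$ one has $\frac1x\leq\frac1n$, with strict inequality except at $x=n$, hence $\int_n^{n+1}\frac{dx}{x}<\frac1n$. Summing over $n=1,\dots,N$ telescopes the integrals to $\int_1^{N+1}\frac{dx}{x}=\log(N+1)$ and yields $\log(1+N)<H_N$. For the upper bound: on each interval $[n-1,n]$ with $n\geq 2$ one has $\frac1x\geq\frac1n$, strictly except at $x=n$, so $\frac1n<\int_{n-1}^{n}\frac{dx}{x}$; summing over $n=2,\dots,N$ gives $\sum_{n=2}^{N}\frac1n<\int_1^{N}\frac{dx}{x}=\log N$, and adding back the $n=1$ term ($=1$) gives $H_N<1+\log N$. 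A brief remark would note that for $N=1$ the upper estimate degenerates (both sides equal $1$), so the clean strict inequality there requires $N\geq 2$, or one reads the $N=1$ case via the empty sum convention $\sum_{n=2}^{1}=0=\log 1$.

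An alternative I would mention in one line is induction on $N$: the inductive step for the upper bound reduces to showing $\frac{1}{N+1}<\log(N+1)-\log N=\log(1+\tfrac1N)$, and for the lower bound to $\log(1+\tfrac1N)<\tfrac1N$ absorbed appropriately — both of which are exactly the single-interval instance of the integral comparison above (applied to $[N,N+1]$), so the two approaches coincide.

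There is no substantive obstacle here: the estimate is a textbook Riemann-sum comparison. The only point demanding any care is the bookkeeping of strictness — making sure each per-interval inequality is strict (which it is, since $1/x$ is \emph{strictly} monotone, so the constant and the integral disagree on a set of positive measure) and handling the small-$N$ boundary behaviour of the upper bound — after which the strict inequalities pass through the finite sum without loss.
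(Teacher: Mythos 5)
Your Riemann-sum comparison is the standard and correct proof of this estimate; the paper itself states the lemma without any proof, so there is no authorial argument to compare against, and your argument fills the gap exactly as intended. One point worth highlighting: your observation about the $N=1$ boundary case is not mere bookkeeping --- for $N=1$ the right-hand inequality reads $1<1+\log 1=1$, which is false as a strict inequality, so the lemma as literally stated in the paper fails at $N=1$ and should be read with $N\geqslant 2$ (or with $\leqslant$ on the right); your proof correctly establishes the strict bounds in all the cases where they actually hold.
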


Hence, by the above lemmas, we have
\[
\begin{split}
\pi(X)&\geqslant\prod_{i=1}^{m}p_i\cdot\prod_{i=1}^{m+K}(1-p_{i}^{-1})+m\\
&=(X-1)\cdot\frac{e^{-\gamma}}{\log X}\biggl(1+O(\frac{1}{\log X})\biggr)+m.
\end{split}
\]
On the other hand, it is easy to see that
\[
\begin{split}
\pi(X)&\leqslant T_{p_m}\cdot 2\cdot(1-\frac{1}{p_{m+1}})\cdot(1-\frac{1}{p_{m+2}})\cdots(1-\frac{1}{p_{m+K}})\\
&=2\prod_{i=1}^{m}p_i\cdot\prod_{i=1}^{m+K}(1-p_{i}^{-1})+m\\
&=2(X-1)\cdot\frac{e^{-\gamma}}{\log X}\biggl(1+O(\frac{1}{\log X})\biggr)+m.
\end{split}
\]
Therefore, we have
\[
\frac{1}{e^{\gamma}}\cdot\frac{X-1}{\log X}\biggl(1+O(\frac{1}{\log X})\biggr)+m
\leqslant\pi(X)\leqslant
\frac{2}{e^{\gamma}}\cdot\frac{X-1}{\log X}\biggl(1+O(\frac{1}{\log X})\biggr)+m.
\]
Since $e^m<1+p_1p_2\cdots p_m=X$, $m<\log X$. Thus we have
\[
\pi(X)\asymp\frac{X}{\log X},\quad\text{where}\ X=1+p_1p_2\cdots p_m.
\]
Therefore, we get an inequality of prime counting function like the Chebyshev's inequality.


\bigskip

\noindent{\bf Acknowledgments :}
We would like to express our gratitude to Professor Vilmos Komornik who invite the author to visit the Department of Mathematics in University of Strasbourg. This work is done during the stay of the author in Strasbourg.


\bigskip

\noindent Haifeng Xu\\
School of Mathematical Sciences\\
Yangzhou University\\
Jiangsu China 225002\\
hfxu@yzu.edu.cn\\
\medskip
%
%


\end{document}